\documentclass[12pt]{amsart}
\usepackage{amssymb}
\usepackage{longtable}

\oddsidemargin -0.5cm
\evensidemargin -0.5cm
\topskip     0pt
\headheight  0pt
\footskip   18pt
\textheight 23cm
\textwidth 17cm

\newtheorem{thm}{Theorem}[section]
\newtheorem{lem}[thm]{Lemma}

\newtheorem{ex}[thm]{Example}

\newtheorem*{prob*}{Open problem}

\theoremstyle{definition}

\theoremstyle{remark}

\newtheorem*{rem*}{Remark}


\newcommand{\R}{\mathbb{R}}
\newcommand{\C}{\mathbb{C}}

\newcommand{\g}{\mathfrak{g}}
\newcommand{\hh}{\mathfrak{h}}
\newcommand{\Aut}{\mathrm{\mathop{Aut}}}

\newcommand{\wt}[1]{\widetilde{#1}}
\newcommand{\ad}{\mathrm{\mathop{ad}}}

\begin{document}


\title{Classification of Novikov algebras}

\author[D. Burde]{Dietrich Burde}
\thanks{The first author was supported by the FWF, Projekt P21683.}
\author[W. de Graaf]{Willem de Graaf}
\thanks{The second author thanks the ESI for its hospitality and support.}
\address{Fakult\"at f\"ur Mathematik\\
Universit\"at Wien\\
  Nordbergstr. 15\\
  1090 Wien \\
  Austria}
\email{dietrich.burde@univie.ac.at}
\address{Dipartimento di Matematica\\
Via Sommarive 14\\
I-38050 Povo (Trento)\\
Italy}
\email{degraaf@science.unitn.it}

\date{\today}

\subjclass{Primary 17D25, 17-04.}

\begin{abstract}
We describe a method for classifying the Novikov algebras with a given associated 
Lie algebra. Subsequently we give the classification of the Novikov algebras of
dimension 3 over $\R$ and $\C$, as well as the classification of the 4-dimensional
Novikov algebras over $\C$ whose associated Lie algebra is nilpotent. In particular
this includes a list of all 4-dimensional commutative associative algebras over $\C$.
\end{abstract}

\maketitle

\section{Introduction}

Novikov algebras arise in many areas of mathematics and physics.
They form a special class of pre-Lie algebras which arise among
other things in the study of rooted trees (Cayley), convex homogeneous
cones (Vinberg), affinely flat manifolds and their fundamental groups
(Auslander, Milnor) and renormalization theory (Connes, Kreimer, Kontsevich). 
Novikov algebras in particular were considered in the study of
Hamiltonian operators, Poisson brackets of hydrodynamic type (Balinskii, Novikov),
operator Yang-Baxter equations and left-invariant affine structures on Lie
groups \cite{BU22}. We refer to \cite{BU1}, and the references therein, 
for more information on these topics.

The theory of Novikov algebras and its classification has been started by Zelmanov
\cite{ZEL}. Further structure theory has been developed in \cite{BU32}.
There have been several efforts to classify complex Novikov algebras in low 
dimensions: Bai and Meng classify the Novikov algebras over $\C$ of dimension up to 3
in \cite{BAI1}, and complete Novikov algebras over $\C$ that have a nilpotent
associated Lie algebra in \cite{BAI2}. We recall that a Novikov algebra is said to be 
complete, or transitive, if the right multiplication by any element is a nilpotent linear map.
Also we note that Novikov algebras are Lie admissible algebras, i.e., the commutator defines the 
structure of a Lie algebra on them. In this paper we outline a systematic method to
classify the Novikov algebras with a given associated Lie algebra. Using it we obtain
classifications that extend the ones by Bai and Meng in several ways. Firstly, we get
the classification of the 3-dimensional Novikov algebras over $\R$ and $\C$ (Section
\ref{sec:dim3}). Secondly (Section \ref{sec:dim4}) we
have classified all 4-dimensional Novikov algebras over $\C$ that have a nilpotent associated
Lie algebra (not just the complete ones). Furthermore, using our method we obtain
a list of Novikov algebras that is ordered with respect to the associated Lie algebra,
i.e., all Novikov algebras with the same Lie algebra are grouped together.

If the associated Lie algebra of a Novikov algebra is abelian, then
the Novikov algebra is nothing other than a commutative and associative algebra. 
This is an interesting class of algebras in its own. It also appears as
a subclass of other classes of algebras being of interest in geometry; as an example
we name LR-algebras \cite{BU34}. Hence an explicit classification
of associative, commutative algebras in low dimension is also very desirable.
Over the past century many classifications of 
commutative associative algebras have appeared in the
literature. Here we mention \cite{BDM}, \cite{gra_ass}, \cite{nilrings}, \cite{poonen}.
The last three of these references have classifications of nilpotent associative algebras.
In Section \ref{sec:caa} we describe an elementary way to obtain the classification
of all commutative associative algebras from the classification of the nilpotent
commutative associative algebras. We apply this method to obtain a list of 
commutative associative algebras of dimensions 3 (over $\R$ and $\C$) and 4 (over $\C$; 
the latter is contained in Section \ref{sec:dim4}). The list of commutative associative
algebras of dimension 3 over $\C$ is also contained in \cite{BDM}. For completeness we
also include it in this paper. Although the classification of the commutative
associative algebras can be deduced from the classification of the nilpotent
commutative associative algebras, no such classification in dimension 4 has explicitly
been done, to the best of our knowledge. 

We start, in the next section, by describing the method that we use for 
classifying Novikov algebras. In order to compute isomorphisms between these
algebras, we rely on computer calculations, using the technique of Gr\"obner bases.
Using this technique we have also established the correspondence between our list 
of 3-dimensional Novikov algebras over $\C$ and the list in \cite{BAI1}; therefore, the
two classifications are equivalent.

Finally we remark that it turned out that the problem of classifying all 4-dimensional
Novikov algebras is far too complex to be undertaken. Therefore, we have restricted ourselves
to the Novikov algebras with a nilpotent associated Lie algebra. We remark that many of
the Novikov algebras that we get are not complete (for example $N^{\hh_1}_{17}$ in Table
\ref{tab3}). Therefore our classification substantially extends the one in \cite{BAI2}.

\section{Classifying Novikov algebras with given Lie algebra}\label{sec:1}

Let $F$ be a field.
An algebra $A$ over $F$ is called Novikov if
\begin{align}
x\cdot (y\cdot z) - (x\cdot y)\cdot z &= y\cdot (x\cdot z) - (y\cdot x)
\cdot z \label{nov1}\\
(x\cdot y)\cdot z &= (x\cdot z)\cdot y,\label{nov2}
\end{align}
for all $x,y,z\in A$. Let $A$ be a Novikov algebra, and 
define a bracket on $A$ by 
$$[x,y] = x\cdot y -y\cdot x.$$
Then it is straightforward to see that $(A,[~,~])$ is a Lie algebra.
The problem considered here is to find, up-to isomorphism, 
all Novikov algebras that have a
given Lie algebra as associated Lie algebra.

Let $\g$ be a finite-dimensional Lie algebra. Let $\theta : \g\times \g\to 
\g$ be a bilinear map. Then we define an algebra $A(\g,\theta)$ as follows.
The underlying vector space of $A(\g,\theta)$ is $\g$. Furthermore, for
$x,y\in \g$ we set $x\cdot y = \theta(x,y)$. We define $T(\g)$ to be the set
of all bilinear $\theta$ such that $[x,y] = \theta(x,y)-\theta(y,x)$ for all
$x,y\in \g$ and such that $A(\g,\theta)$ is a Novikov algebra.

The automorphism group, $\Aut(\g)$, of $\g$ acts on $T(\g)$ by 
$\phi\theta(x,y) = \phi^{-1}( \theta(\phi(x),\phi(y)))$, for $\phi\in \Aut(\g)$,
and $\theta\in T(\g)$.

\begin{lem}\label{lem:1}
Let $\theta_1,\theta_2\in T(\g)$. Then $A(\g,\theta_1)$ and $A(\g,\theta_2)$
are isomorphic if and only if there is a $\phi\in \Aut(\g)$ with $\phi\theta_1
= \theta_2$. 
\end{lem}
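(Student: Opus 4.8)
The statement is a standard "orbits under the automorphism group parametrize isomorphism classes" result, so the plan is to unwind the definitions on both sides and check that an isomorphism of Novikov algebras respecting the given Lie structure is exactly an automorphism of $\g$ carrying $\theta_1$ to $\theta_2$. Concretely, suppose first that $\phi\in\Aut(\g)$ satisfies $\phi\theta_1=\theta_2$. I would verify that $\phi$, viewed as a linear map on the underlying vector space $\g$, is an algebra isomorphism $A(\g,\theta_2)\to A(\g,\theta_1)$ (or its inverse, depending on how the action is set up): by definition of the product in $A(\g,\theta_i)$ we have $x\cdot_{2} y=\theta_2(x,y)=(\phi\theta_1)(x,y)=\phi^{-1}(\theta_1(\phi(x),\phi(y)))$, hence $\phi(x\cdot_2 y)=\theta_1(\phi(x),\phi(y))=\phi(x)\cdot_1\phi(y)$, which is precisely the multiplicativity condition. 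So $\phi:A(\g,\theta_2)\to A(\g,\theta_1)$ is an isomorphism of Novikov algebras, giving one direction.

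For the converse, suppose $\psi:A(\g,\theta_1)\to A(\g,\theta_2)$ is an isomorphism of Novikov algebras; I want to produce the required $\phi\in\Aut(\g)$. The key observation is that $\psi$ is automatically compatible with the associated Lie brackets: since the bracket on each $A(\g,\theta_i)$ is the commutator $x\cdot y-y\cdot x$, any algebra isomorphism satisfies $\psi([x,y]_1)=\psi(x\cdot_1 y-y\cdot_1 x)=\psi(x)\cdot_2\psi(y)-\psi(y)\cdot_2\psi(x)=[\psi(x),\psi(y)]_2$. But by construction the associated Lie algebra of both $A(\g,\theta_1)$ and $A(\g,\theta_2)$ is literally $\g$ with its given bracket (that is what membership in $T(\g)$ guarantees), so $\psi$ is a Lie algebra homomorphism $\g\to\g$, and being bijective it lies in $\Aut(\g)$. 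Taking $\phi=\psi^{-1}$ and reversing the computation from the first paragraph shows $\phi\theta_1=\theta_2$.

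The only genuinely delicate point is bookkeeping: one must be careful about the direction of the maps and the fact that the action is defined with $\phi^{-1}$, so that "$\phi$ sends $\theta_1$ to $\theta_2$" corresponds to "$\phi$ is an isomorphism $A(\g,\theta_2)\to A(\g,\theta_1)$" rather than the other way around. Once the conventions are fixed, both implications are short formal verifications; there is no real obstacle, and in particular no need to invoke the Novikov axioms themselves beyond knowing that $A(\g,\theta_i)$ is indeed a Novikov algebra with associated Lie algebra $\g$, which is built into the definition of $T(\g)$. I would present the argument as the two displayed chains of equalities above, with a sentence pinning down the conventions at the start.
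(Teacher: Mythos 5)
Your proof is correct and follows essentially the same route as the paper: both directions are the same definition-unwinding of the $\Aut(\g)$-action, with the converse obtained by noting that an algebra isomorphism preserves commutators and hence is a Lie automorphism of $\g$; your explicit commutator argument and the inversion $\phi=\psi^{-1}$ only make slightly more careful what the paper states tersely.
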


\begin{proof}
If $\phi \theta_1 = \theta_2$, then $\phi (\theta_2(x,y) ) = \theta_1( \phi(x),
\phi(y))$. Hence $\phi : A(\g,\theta_2) \to A(\g,\theta_1)$ is an isomorphism.

On the other hand, if $\phi : A(\g,\theta_2) \to A(\g,\theta_1)$ is an 
isomorphism of Novikov algebras, then $\phi \theta_2(x,y) = \theta_1(\phi(x),
\phi(y))$ for all $x,y\in \g$. Hence $\phi\theta_1 = \theta_2$.
This also implies that $\phi$ is an automorphism
of $\g$.
\end{proof}

Set $n=\dim \g$, and let $e_1,
\ldots, e_n$ be a basis of $\g$. Then an element
in $T(\g)$ is given by a sequence of $n\times n$-matrices $L_1,\ldots,L_n$, 
where $L_i$ describes the left multiplication of $e_i$. More precisely,
if $\theta\in T(\g)$ satisfies $\theta(e_i,e_j) = \sum_{k=1}^n c_{ij}^k
e_k$, then for the $L_i$ we have $L_i(k,j) = c_{ij}^k$. 

This way we view $T(\g)$ is an affine variety in
$F^{n^3}$. We get equations for this variety by plugging $x=e_i$, $y=e_j$,
$z=e_k$ in (\ref{nov1}) and (\ref{nov2}), for $1 \leq i,j,k\leq n$, and by 
requiring that $e_i\cdot e_j -e_j\cdot e_i = [e_i,e_j]$ for $i<j$. The
equations corresponding to this last requirement will be linear. The
equations corresponding to (\ref{nov1}) and (\ref{nov2}) will be polynomial;
however we can reformulate (\ref{nov2}) such that it leads to linear equations
as well. This works as follows. Let $A$ be an algebra. 
For $x\in A$ let $L(x) : A\to A$ be the
linear map given by $L(x)(y) = x\cdot y$. Similarly we define $R(x)(y) =
y\cdot x$. The adjoint map is defined by $\ad(x)= L(x)-R(x)$.
Now assume that $A$ satisfies (\ref{nov1}). Then (\ref{nov2}) holds
as well if and only if $[R(x),R(y)]=0$ for all $x,y\in A$. Furthermore,
(\ref{nov1}) implies that $[L(x),L(y)] = L([x,y])$. Now, using $R(x) = 
L(x)-\ad(x)$ we get that $[R(x),R(y)] = 0$ is equivalent to
\begin{equation}\label{nov3}
L([x,y])+\ad([x,y])-[L(x),\ad(y)]-[\ad(x),L(y)]=0.
\end{equation}
If in this equation we set $x=e_i$, $y=e_j$, and noting that the maps
$\ad e_k$ are given by the Lie multiplication on $\g$, we get that 
(\ref{nov3}) is equivalent to a set of linear equations.

\begin{ex}\label{exa:1}
Consider the 2-dimensional Lie algebra $\g$ over $F=\C$ 
with basis $e_1,e_2$ and 
Lie bracket $[e_1,e_2]=e_1$. Write $L_i = L(e_i)$ and 
$$ L_1 = \begin{pmatrix} a_{11} & a_{12} \\ a_{21} & a_{22} \end{pmatrix},~~
L_2 = \begin{pmatrix} b_{11} & b_{12} \\ b_{21} & b_{22} \end{pmatrix}.$$
Then (\ref{nov3}) with $x=e_1$, $y=e_2$ is equivalent to the equations
$a_{21}=0$, $a_{11}=b_{21}$, $a_{22}=-b_{21}$, $b_{22}= b_{11}+1$.
Furthermore, $L(e_1)(e_2)-L(e_2)(e_1)=[e_1,e_2]$ is tantamount to 
$a_{22}=b_{21}$, $a_{12}=b_{11}+1$. So the linear equations that we get imply
that
\begin{equation}\label{exa1:Li}
L_1 = \begin{pmatrix} 0 & b_{22} \\ 0 & 0 \end{pmatrix},~~
L_2 = \begin{pmatrix} b_{22}-1 & b_{12} \\ 0 & b_{22} \end{pmatrix}.
\end{equation}
These matrices already define a Novikov structure, i.e., (\ref{nov1})
is automatically satisfied.
\end{ex}

By Lemma \ref{lem:1}, classifying the Novikov algebras with associated
Lie algebra equal to $\g$ is the same as listing the $\Aut(\g)$-orbits
on $T(\g)$. In order to carry this out we write an element of $\Aut(\g)$ 
as an $n\times n$-matrix, with indeterminates as entries that satisfy some
polynomial equations. Then we try and work out what the orbits are. We
illustrate this by example.

\begin{ex}\label{exa:2}
We consider the situation of Example \ref{exa:1}. An element of $\Aut(\g)$
is given by 
$$\phi = \begin{pmatrix} a & b \\ 0 & 1 \end{pmatrix},$$
where $a\neq 0$. (Here we use the column convention, so $\phi(e_1) =ae_1$,
$\phi(e_2) = be_1+e_2$.) Let a $\theta\in T(\g)$ be given by the matrices
(\ref{exa1:Li}). Then a short calculation shows that $\phi\theta$ corresponds
to two matrices of the same shape, where $b_{22}$ is unchanged, but where
$b_{12}$ is changed into 
$$a^{-1}(b(b_{22}-1)+b_{12}).$$
We now distinguish two cases. In the first case $b_{22}\neq 1$. Then we 
can choose $b$ so that $b_{12}$ is mapped to 0. So we get a $1$-parameter
family of Novikov algebras given by 
$$ e_1\cdot e_2 = b_{22}e_1,~ e_2\cdot e_1 = (b_{22}-1)e_1, ~ 
e_2\cdot e_2 = b_{22}e_2.$$
Algebras corresponding to different values of the parameter $b_{22}$ are
not isomorphic. 

In the second case we have $b_{22}=1$. Here we have two subcases. If $b_{12}=0$
then we get an algebra which is included in the above parametric family.
On the other hand, if $b_{12}\neq 0$ then we can choose $a=b_{12}$ and
we see that it is changed to $1$. So we get one more algebra, given by
$$ e_1\cdot e_2 = e_1,~ e_2\cdot e_2 = e_1+e_2.$$
\end{ex}

Summarising, our procedure to classify the Novikov algebras with
given associated Lie algebra $\g$ consists of two steps:

\begin{enumerate}
\item Find the equations for $T(\g)$.
\item List the orbits of $\Aut(\g)$ on $T(\g)$. 
\end{enumerate}

The last step is by far the most difficult one. And we are not always
able to carry it out in full; in other words, sometimes we obtain two
Novikov algebras that are isomorphic without us being able to show that
the corresponding bilinear maps in $T(\g)$ lie in the same $\Aut(\g)$-orbit.
To deal with a situation of this kind we use a method based on the algorithmic 
technique of Gr\"obner bases (cf. \cite{clo}). For this again we write an
element of $\Aut(\g)$ as a matrix with entries that are indeterminates,
satisfying some polynomial equations. Let $R$ denote the ring containing
these indeterminates. Let $\theta_1,\theta_2\in T(\g)$ be
given. Then we compute the matrices $L_i$ corresponding to $\phi\theta_1$;
they have entries that are polynomials in the coefficients of $\phi$
(which are indeterminates). Then the requirement that $\phi\theta_1 = \theta_2$
leads to a set of polynomial equations $p=0$ where $p\in P\subset R$. 
Let $I$ be the ideal of $R$ generated by $P$. Now, solving $p=0$ for $p\in P$
is the same as solving $g=0$ for any generating set $G$ of $I$. For this
a Gr\"obner basis is particularly convenient. In particular if the Gr\"obner
basis is computed relative to a lexicographical order, then the resulting
polynomial equations have a triangular structure, which makes them easier
to solve. A second feature of this method is that if the algebras corresponding
to the $\theta_i$ happen {\em not} to be isomorphic, then the reduced Gr\"obner
basis is $\{1\}$. So in this case no hand calculations are necessary.
For the computation of the Gr\"obner bases we used the computer algebra system
{\sc Magma} (\cite{magma}). We illustrate this with an example.

\begin{ex}
Let $\g$ be the 3-dimensional Lie algebra with basis $e_1,e_2,e_3$ and
nonzero bracket $[e_1,e_2]=e_3$. Consider a family of Novikov algebras
$N_1^\alpha$ given by $e_1\cdot e_2 = (\alpha+1)e_3$, $e_2\cdot e_1 = \alpha e_3$. 
Denote the corresponding element of $T(\g)$ by $\theta_\alpha$. The elements
of $\Aut(\g)$ are given by 
$$ \phi = \begin{pmatrix}x_{11} & x_{12} & 0 \\ x_{21} & x_{22} & 0 \\
x_{31} & x_{32} & \delta \end{pmatrix},$$
with $\delta = x_{11}x_{22}-x_{12}x_{21}$. The Novikov algebra corresponding
to $\phi\theta_\alpha$ has nonzero products
\begin{align*}
e_1\cdot e_1 &= (2D x_{11}x_{21}\alpha + D x_{11}x_{21})e_3\\
e_1\cdot e_2 &= (2D x_{12} x_{21}\alpha + D x_{12}x_{21} + \alpha + 1)e_3 \\
e_2\cdot e_1 &= (2Dx_{12}x_{21}\alpha + Dx_{12}x_{21} + \alpha)e_3\\
e_2\cdot e_2 &= (2Dx_{12}x_{22}\alpha + Dx_{12}x_{22})e_3,
\end{align*}
where $D = \delta^{-1}$. 

Now let $N_2^\beta$ be the family of Novikov algebras given by 
$e_1\cdot e_1=\beta e_3$, $e_1\cdot e_2 = e_3$, $e_2\cdot e_2=e_3$.
Then $N_1^\alpha \cong N_2^\beta$ if and only if the
following polynomial equations are satisfied
\begin{align*}
& 2D x_{11}x_{21}\alpha + D x_{11}x_{21}-\beta=0\\
& 2D x_{12} x_{21}\alpha + D x_{12}x_{21} + \alpha =0\\
& 2Dx_{12}x_{22}\alpha + Dx_{12}x_{22}-1 =0.
\end{align*}
We let $I$ be the ideal of the polynomial ring $\C[D,x_{11},x_{12},x_{21},x_{22},
\alpha,\beta]$ generated by the left hand sides of these equations along
with the polynomial $D(x_{11}x_{22}-x_{12}x_{21})-1$. A reduced Gr\"obner basis
of $I$ with respect to the lexicographical order with $ D > x_{11} > x_{12} >
\cdots > \alpha >\beta$ consists of the polynomials
\begin{align*}
&    D x_{12} x_{22} \alpha + \tfrac{1}{2} D x_{12} x_{22} - \tfrac{1}{2},\\
&    D x_{12} x_{22} \beta - \tfrac{1}{4} D x_{12} x_{22} + \tfrac{1}{2} 
\alpha + \tfrac{1}{4},\\
&    x_{11} - x_{12} \alpha - x_{12},\\
&    x_{21} + x_{22} \alpha,\\
&    \alpha^2 + \alpha + \beta.
\end{align*}
It follows that $N_1^\alpha\cong N_2^\beta$ if and only if these polynomial equations have a
solution. So
from the last polynomial we see that $N_1^\alpha\cong N_2^\beta$ implies 
that $\beta = -\alpha^2-\alpha$. Conversely, suppose that this is satisfied. From the 
third and fourth elements of the Gr\"obner basis we get that an isomorphism
$\phi$ has to be of the form
$$ \begin{pmatrix} (\alpha+1)u & u & 0 \\ -\alpha v & v & 0 \\
x_{31} & x_{32} & \delta \end{pmatrix}.$$
The determinant of the $2\times 2$-block in the upper left corner is
$(2\alpha +1)uv$. Now if $\alpha \neq -\tfrac{1}{2}$ then we choose $u=v=1$, and $x_{31}=x_{32}=0$
and get the linear map
$\phi : N_2^{-\alpha^2-\alpha} \to N_1^{\alpha}$ given by 
$$ \begin{pmatrix} \alpha+1 & 1 & 0 \\ -\alpha  & 1 & 0 \\
0 & 0 & 2\alpha +1 \end{pmatrix}.$$
It is straightforward to check that in fact this is an isomorphism.

There remains the case where $\alpha = -\tfrac{1}{2}$. Adding the 
polynomial $\alpha +\tfrac{1}{2}$ to the generating set of the ideal
we get that the Gr\"obner basis is $\{1\}$. Hence in this case the algebras
are not isomorphic. 
\end{ex}

\section{Novikov algebras with an abelian Lie algebra}\label{sec:caa}

If the associated Lie algebra of a Novikov algebra is abelian, then
the Novikov algebra is a commutative associative algebra (CAA).
Conversely, every CAA is a Novikov algebra with abelian Lie algebra.
In this section we describe how to obtain a classification of the CAA's
of dimension $3$ over $\R$ and $\C$. 

First we introduce some notation, and recall some facts on associative algebras.
We refer to \cite{drozd_kir} for an in-depth account of these matters.

Let $A$ be an associative algebra over a field $F$ of characteristic 0.
If $A$ does not have a $1$, then we write $\wt{A}$ for the
algebra $A\oplus \langle 1 \rangle$ (with $a\cdot 1 = 1\cdot a = a$ for all $a\in A$).
If there are nontrivial proper ideals $B,C$ of $A$ such that 
$A=B\oplus C$ (direct sum of vector spaces), then $A$ is said to be a direct sum. 
(Note that necessarily $BC=CB=0$.)
The algebra $A$ is said to be nilpotent if there is an
$m>0$ such that $a_1\cdots a_m= 0$ for all $a_1,\ldots,a_m\in A$.
The radical $R$ of $A$ is its largest nilpotent ideal; and $A$ is said to be semisimple
if $R=0$. More generally, if $A$ has a one, then there exists a semisimple subalgebra
$S$ of $A$ such that $A=S\oplus R$ (direct sum of vector spaces), and $S\cong A/R$.
Furthermore, if $S$ is a semisimple commutative algebra, then it is a direct sum: 
$S= K_1\oplus \cdots \oplus K_m$, where each $K_i$ is a field extension of $F$.
This decomposition corresponds to a decomposition of the identity element: 
$1=\epsilon_1+\cdots +\epsilon_m$, where the $\epsilon_i$ are orthogonal primitive
idempotents and $K_i = \epsilon_i S$.
In particular, if the base field is $\R$, then $K_i$ is either isomorphic to $\R$ or
to $\C$. We write $\C$ for the commutative associative algebra over $\R$ with basis $e_1,e_2$ and
non-zero products $e_1^2=e_1$, $e_1e_2=e_2e_1=e_2$, $e_2^2=-e_1$.

\begin{lem}\label{lem:caa}
Let $A$ be a commutative associative algebra over $\R$. Suppose that $A$ is not a 
direct sum of proper non-trivial ideals. Then $A$ is either
\begin{itemize}
\item nilpotent, or
\item equal to $\wt{B}$ for a nilpotent $B$, or
\item equal to $\C\oplus R$, where $R$ is the radical.
\end{itemize}
\end{lem}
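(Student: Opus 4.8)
The plan is to use the Wedderburn-type splitting $A = S \oplus R$ recalled just before the statement, where $S \cong A/R$ is a semisimple commutative subalgebra and $R$ is the radical, together with the decomposition of $S$ into field extensions of $\R$. First I would dispose of the case $R = A$, which is the nilpotent alternative. So assume $R \neq A$, i.e.\ $S \neq 0$. Then $S$ contains the identity $1$ of $A$ and decomposes as $S = K_1 \oplus \cdots \oplus K_m$ with each $K_i$ isomorphic to $\R$ or $\C$; correspondingly $1 = \epsilon_1 + \cdots + \epsilon_m$ with orthogonal primitive idempotents $\epsilon_i$.

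\smallskip

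The next step is to reduce to $m = 1$. Each $\epsilon_i$ is a central idempotent of $A$ (it is central in $S$, and it commutes with $R$ since $A$ is commutative), so $A_i := \epsilon_i A$ is an ideal and $A = A_1 \oplus \cdots \oplus A_m$ as a direct sum of ideals. Since by hypothesis $A$ is \emph{not} a direct sum of proper non-trivial ideals, either $m = 1$, or $m \geq 2$ and all but one of the $A_i$ must be trivial, which forces them to be $0$ (a summand $A_i \supseteq \epsilon_i \neq 0$ is non-trivial) — so again $m = 1$. Hence $S = K_1$ is a single field, $\R$ or $\C$.

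\smallskip

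Now I split on $S \cong \R$ versus $S \cong \C$. If $S \cong \R$, then $1$ spans $S$ and $A = \langle 1\rangle \oplus R$ with $1$ acting as identity on all of $A$, so $A = \wt{B}$ where $B = R$ is nilpotent; this is the second alternative. If $S \cong \C$, then $S$ is the copy of the algebra $\C$ (over $\R$) described before the lemma, and $A = S \oplus R = \C \oplus R$ — but here the sum is only a vector-space direct sum (not an ideal direct sum), since $R$ need not be an ideal complement as algebras; it is the radical though, so this matches the third alternative exactly. This exhausts the cases, proving the lemma. The one point deserving care — and the mild obstacle — is checking that when $m \geq 2$ the directness $A = \bigoplus \epsilon_i A$ really is a direct sum of \emph{ideals} in the sense of the paper's definition (each $\epsilon_i A$ a two-sided ideal with $(\epsilon_i A)(\epsilon_j A) = 0$ for $i \neq j$), so that the hypothesis genuinely applies; this follows from orthogonality of the idempotents and commutativity, but it is the step I would write out in full.
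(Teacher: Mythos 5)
There is a genuine gap: your argument silently assumes that $A$ has an identity element as soon as $R\neq A$. The splitting $A=S\oplus R$ recalled before the lemma is stated only for algebras \emph{with} a one, and even if one grants such a splitting for non-unital $A$, the identity of the semisimple subalgebra $S$ need not be an identity of $A$: the algebra spanned by $e_1,e_2$ with $e_1^2=e_1$ and $e_1e_2=e_2e_1=e_2^2=0$ has $S=\langle e_1\rangle\neq 0$, radical $\langle e_2\rangle\neq A$, and no identity. (That example happens to be decomposable, but you cannot wave this away by indecomposability, because nothing in your proposal links the absence of an identity to being a direct sum of ideals.) So your opening dichotomy ``either $R=A$, hence nilpotent, or $1\in S\subseteq A$'' is precisely the unproved point, and it is where the paper does its real work: when $A$ has no one, the paper passes to $\wt{A}=A\oplus\langle 1\rangle$, splits $\wt{A}=S\oplus R$, writes the primitive idempotents as $\epsilon_i=\mu_i\cdot 1+a_i$ with $a_i\in A$, shows all but one of them lie in $A$, uses indecomposability together with the absence of an identity in $A$ to force $m=1$, and then concludes that $S\cong\R$ gives $A$ nilpotent while $S\cong\C$ is impossible (it would force $\mu^2=-1$ with $\mu\in\R$). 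None of this case is covered by your write-up; in particular the first alternative of the lemma (``nilpotent'') is obtained by you only under the extra assumption $R=A$, which you never show is the only non-unital possibility.

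The part you do treat, namely unital $A$, matches the paper's first case and is fine: the Peirce decomposition $A=\epsilon_1A\oplus\cdots\oplus\epsilon_mA$ into ideals, indecomposability forcing $m=1$, then $S\cong\R$ giving $A=\wt{B}$ with $B=R$, and $S\cong\C$ giving the third alternative as a vector-space sum. The missing case can be repaired in your style without adjoining a unit -- lift the identity of $A/R$ to an idempotent $\epsilon\in A$, use the Peirce decomposition $A=\epsilon A\oplus\{a\in A:\epsilon a=0\}$ into ideals, note the second summand lies in $R$, and invoke indecomposability to conclude it vanishes, so $\epsilon$ is an identity of $A$ after all -- but this (or the paper's $\wt{A}$ argument) must actually be supplied, and your proposal does not contain it.
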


\begin{proof}
If $A$ has a $1$ then $A = S\oplus R$, where $S$ is semisimple, and
$R$ is the radical. As above $S=K_1\oplus \cdots \oplus K_m$, where
the $K_i$ are field extensions of $\R$. Let $1=\epsilon_1+\cdots +\epsilon_m$
be the corresponding decomposition of $1$, where $K_i = \epsilon_i S$.
Then $A$ is the direct sum of the ideals $\epsilon_i A$. If $m>1$, they are all nontrivial
and proper. Hence it follows that $m=1$ and $S$ is a field. If $S\cong \R$ then
$A\cong \wt{R}$. If $S\cong \C$ then we are in the third case.

If $A$ has no $1$, then we consider $\wt{A}$. Again we get a decomposition $\wt{A}
=S\oplus R$, and $m$ orthogonal idempotents $\epsilon_i$. Writing $\epsilon_i = \mu_i\cdot 1
+a_i$, with $\mu_i\in \R$ and $a_i\in A$ we see that the fact that the $\epsilon_i$ 
are orthogonal idempotents with sum $1$ implies
that $m-1$ of them (say $\epsilon_1,\ldots,\epsilon_{m-1}$) lie in $A$, and $\epsilon_m$ does
not. Again we get that $A$ is the direct sum of the ideals $\epsilon_iA$. If $\epsilon_i A=A$
for some $i\leq m-1$ then $\epsilon_i$ is an identity element in $A$. So this cannot happen.
Again it follows that $m=1$. If $S\cong \R$ then $A$ is nilpotent. If $S\cong \C$ then
a basis of $S$ is $e_1=1$, $e_2$ with $e_2^2=-e_1$. However, we can also write $e_2 =
\mu\cdot 1 + a$ for some $\mu\in \R$ and $a\in A$. From this we get that $\mu^2 =-1$.
We conclude that this case cannot occur. 
\end{proof}

From \cite{gra_ass} we get all real nilpotent CAA's of dimensions $\leq 3$. They are

\begin{longtable}{|r|l|l|}
\caption{Nilpotent CAA's of dimensions $\leq 3$ over $\R$.}\label{tab:nilcaas}
\endfirsthead
\hline
\multicolumn{3}{|l|}{\small\slshape Nilpotent CAA's.} \\ 
\hline
\endhead
\hline
\endfoot
\endlastfoot

\hline
$\dim$ & name & multiplication table \\

\hline
0 & $A_0$ & \\
\hline 
1 & $A_1$ &  \\
\hline 
2 & $A_{2,1}$ & \\
2 & $A_{2,2}$ & $e_1^2=e_2$\\
\hline

3 & $A_{3,1}$ & \\
3 & $A_{3,2}$ & $e_1^2=e_2$\\
3 & $A_{3,3}$ & $e_1^2=e_2$, $e_1e_2=e_3$\\
3 & $A_{3,4}$ & $e_1^2=e_3$, $e_2^2=e_3$ \\
3 & $A_{3,5}$ & $e_1^2=-e_3$, $e_2^2=e_3$ \\
\hline

\end{longtable}

Here, over $\C$ the algebras $A_{3,4}$ and $A_{3,5}$ are isomorphic.

Now, using Lemma \ref{lem:caa}, we get all CAA's of dimension $3$ over $\R$. They
are: $A_{3,i}$ for $1\leq i\leq 5$, $\wt{A_{2,i}}$, $i=1,2$, $\wt{A_0}\oplus \wt{A_0}\oplus
\wt{A_0}$, $\wt{A_0}\oplus \wt{A_0}\oplus A_1$, $\wt{A_0}\oplus A_1 \oplus A_1$,
$\wt{A_1}\oplus A_1$, $\wt{A_1}\oplus \wt{A_0}$, $\C\oplus A_1$, $\C\oplus \wt{A_0}$, 
$A_{2,2}\oplus \wt{A_0}$.

So we get 15 algebras in total. Over $\C$ we get 12 of them, as the pairs
$A_{3,4}$, $A_{3,5}$ and $\wt{A_0}\oplus \wt{A_0}\oplus
\wt{A_0}$, $\C\oplus \wt{A_0}$ and $\wt{A_0}\oplus \wt{A_0}\oplus A_1$, $\C\oplus A_1$
become isomorphic and there are no other isomorphisms.

\section{Novikov algebras of dimension three over $\R$ and $\C$}\label{sec:dim3}

A simple Lie algebra of dimension 3 does not have Novikov structures.
The Novikov algebras of dimension 3 with abelian Lie algebra were classified
in the previous section. So this leaves the classification of the Novikov
algebras of dimension 3, where the associated Lie algebra is solvable and
non-abelian.

From \cite{gra11} we get that over $\R$ and $\C$ there are the following
solvable Lie algebras:

\begin{center}
\begin{tabular}{|l|l|}
\hline 
name & nonzero brackets \\
\hline
$\g_1$ & $[e_1,e_2]=e_2$, $[e_1,e_3]=e_3$\\ 
$\g_2^\alpha$ & $[e_1,e_2] = e_3$, $[e_1,e_3]=\alpha e_2+e_3$ \\
$\g_3$ & $[e_1,e_2]=e_3$\\
$\g_4$ & $[e_1,e_2]= e_3$, $[e_1,e_3]=e_2$\\
$\g_5$ & $[e_1,e_2]= e_3$, $[e_1,e_3]=-e_2$\\
\hline
\end{tabular}
\end{center}

Among these algebras there are no isomorphisms, except that $\g_4$ and
$\g_5$ are isomorphic over $\C$, but not over $\R$. 

Next are the tables of Novikov algebras that we get. On some occasions we give a 
parametrised class of algebras. In those cases, if nothing is stated about isomorphisms,
then different values of the parameter give non-isomorphic Novikov algebras. Moreover,
the classification that we give is over $\R$. Over $\C$ some isomorphisms between elements
of the list arise, and those are explicitly given.
The Novikov algebras with associated Lie algebra $\g$ will be denoted $N_i^\g$, $i=1,2,\ldots$.
For the Lie algebra $\g_2^\alpha$ we have three tables with associated Novikov algebras.
In Table \ref{tab3_2} we list the Novikov algebras that we get for generic values
of the parameter $\alpha$. Table \ref{tab3_3} contains the extra Novikov algebras that
we get when $\alpha= -\tfrac{2}{9}$. And in Table \ref{tab3_4} we give the extra algebras
that arise when $\alpha=0$. 

\begin{longtable}{|l|l|}
\caption{Novikov algebras with Lie algebra $\g_1$.}\label{tab3_1}
\endfirsthead
\hline
\multicolumn{2}{|l|}{\small\slshape Novikov algebras corresponding to $\g_1$.} \\ 
\hline
\endhead
\hline
\endfoot
\endlastfoot

\hline
name & multiplication table \\

\hline

$N^{\g_1}_1(a)$ & $e_1e_1 = a e_1$, $e_1e_2 = (a+1)e_2$,
$e_1e_3 =(a+1)e_3$, $e_2e_1 = a e_2$, $e_3e_1=a e_3$ \\

$N^{\g_1}_2$ & $e_1e_1 = -e_1+e_2$, 
$e_2e_1 = - e_2$, $e_3e_1=- e_3$\\

\hline
\end{longtable}

\begin{longtable}{|l|l|}
\caption{Novikov algebras with Lie algebra $\g_2^\alpha$.}\label{tab3_2}
\endfirsthead
\hline
\multicolumn{2}{|l|}{\small\slshape Novikov algebras corresponding 
to $\g_2^\alpha$.} \\ 
\hline
\endhead
\hline
\endfoot
\endlastfoot

\hline
name & multiplication table \\

\hline

$N^{\g_2^\alpha}_1(a)$ & $e_1e_1 = a e_1$, $e_1e_2 = a e_2+e_3$,
$e_1e_3 = \alpha e_2+(a+1)e_3$, $e_2e_1 = a e_2$, $e_3e_1= a e_3$ \\

$N^{\g_2^{a^2+a}}_2(a)$ & $e_1e_1 = a e_1+e_2$, $e_1e_2 = a e_2+e_3$,
$e_1e_3 = (a^2+a) e_2+(a+1)e_3$, $e_2e_1 = a e_2$, \\
& $e_3e_1= a e_3$ \\

\hline
\end{longtable}

\begin{longtable}{|l|l|}
\caption{Novikov algebras with Lie algebra $\g_2^{-\tfrac{2}{9}}$.}\label{tab3_3}
\endfirsthead
\hline
\multicolumn{2}{|l|}{\small\slshape Novikov algebras corresponding 
to $\g_2^{-\tfrac{2}{9}}$.} \\ 
\hline
\endhead
\hline
\endfoot
\endlastfoot

\hline
name & multiplication table \\

\hline

$N^{\g_2^{-\tfrac{2}{9}}}_3$ & $e_1e_1 = -\tfrac{1}{3} e_1$, $e_1e_2 =  
\tfrac{8}{3} e_2-8 e_3$,
$e_1e_3 = \tfrac{7}{9} e_2 - \tfrac{7}{3}e_3$, $e_2e_1 = \tfrac{8}{3} e_2
-9e_3$, \\
& $e_3e_1= e_2-\tfrac{10}{3} e_3$ \\

$N^{\g_2^{-\tfrac{2}{9}}}_4$ & $e_1e_1 = -\tfrac{1}{3} e_1+e_2$, $e_1e_2 =  
\tfrac{8}{3} e_2-8 e_3$,
$e_1e_3 = \tfrac{7}{9} e_2 - \tfrac{7}{3}e_3$, $e_2e_1 = \tfrac{8}{3} e_2
-9e_3$, \\
& $e_3e_1= e_2-\tfrac{10}{3} e_3$ \\

$N^{\g_2^{-\tfrac{2}{9}}}_5(a)$ & $e_1e_1 = 3a e_1 + (-3a^2-\tfrac{1}{3}
a)e_2$, $e_1e_2 = 6a e_2 +(-9a +1)e_3$, \\
& $e_1e_3 = (a-\tfrac{2}{9} )e_2+e_3$,$e_2e_1 = 6a e_2 -9a e_3$, $e_2e_2 = -3e_2 +9e_3$, \\
& $e_2e_3 = -e_2+3e_3$, $e_3e_1 = a e_2$, 
$e_3e_2 = -e_2 +3e_3$, $e_3e_3 = -\tfrac{1}{3} e_2+e_3$\\ 

$N^{\g_2^{-\tfrac{2}{9}}}_6$ & $e_1e_1 = -\tfrac{2}{3} e_1 -\tfrac{8}{27}
e_2+\tfrac{2}{3}e_3$, $e_1e_2 = -\tfrac{4}{3} e_2 +3 e_3$, 
$e_1e_3 = -\tfrac{4}{9}e_2+e_3$,\\
& $e_2e_1 = -\tfrac{4}{3} e_2 +2 e_3$, $e_2e_2 = -3e_2 +9e_3$, 
$e_2e_3 = -e_2+3e_3$, $e_3e_1 = -\tfrac{2}{9}e_2$, \\
& $e_3e_2 = -e_2 +3e_3$, $e_3e_3 = -\tfrac{1}{3} e_2+e_3$\\

$N^{\g_2^{-\tfrac{2}{9}}}_7$ & $e_1e_1 = -\tfrac{2}{3} e_1 -\tfrac{11}{27}
e_2+e_3$, $e_1e_2 = -\tfrac{4}{3} e_2 +3 e_3$, 
$e_1e_3 = -\tfrac{4}{9}e_2+e_3$,\\
& $e_2e_1 = -\tfrac{4}{3} e_2 +2 e_3$, $e_2e_2 = -3e_2 +9e_3$, 
$e_2e_3 = -e_2+3e_3$, $e_3e_1 = -\tfrac{2}{9}e_2$, \\
& $e_3e_2 = -e_2 +3e_3$, $e_3e_3 = -\tfrac{1}{3} e_2+e_3$\\

\hline
\end{longtable}

Remarks:
\begin{itemize}
\item The algebras from Table \ref{tab3_2}, $N^{\g_2^\alpha}_1(a)$ for 
$\alpha = -\tfrac{2}{9}$ and $N^{\g_2^{a^2+a}}_2(a)$
for $a^2+a=-\tfrac{2}{9}$ are not in Table \ref{tab3_3}. The latter condition means
$a=-\tfrac{1}{3}$ or $a=-\tfrac{2}{3}$. This yields two non-isomorphic algebras.
\item Over $\C$ the algebra $N^{\g_2^{-\tfrac{2}{9}}}_7$ is isomorphic to 
$N^{\g_2^{-\tfrac{2}{9}}}_5(-\tfrac{2}{9})$. To describe the isomorphism, let $e_i$ be the basis
elements of $N^{\g_2^{-\tfrac{2}{9}}}_5(-\tfrac{2}{9})$ and $y_i$ those of $N^{\g_2^{-\tfrac{2}{9}}}_7$.
Then
\begin{align*}
e_1 &\mapsto y_1 +\frac{-2-\sqrt{-2}}{9}y_2\\
e_2 &\mapsto \frac{1-2\sqrt{-2}}{2}y_2 + \frac{-3+3\sqrt{-2}}{2}y_3\\
e_3 &\mapsto \frac{1-\sqrt{-2}}{3}y_2 + \frac{-2+\sqrt{-2}}{2} y_3
\end{align*}
defines an isomorphism $N^{\g_2^{-\tfrac{2}{9}}}_5(-\tfrac{2}{9}) \to N^{\g_2^{-\tfrac{2}{9}}}_7$.
Over $\R$ they are not isomorphic.
\end{itemize}

\begin{longtable}{|l|l|}
\caption{Novikov algebras with Lie algebra $\g_2^0$.}\label{tab3_4}
\endfirsthead
\hline
\multicolumn{2}{|l|}{\small\slshape Novikov algebras corresponding 
to $\g_2^0$.} \\ 
\hline
\endhead
\hline
\endfoot
\endlastfoot

\hline
name & multiplication table \\

\hline

$N^{\g_2^0}_8(a)$ & $e_1e_1 = a e_1$, $e_1e_2 = (a +1)e_3$,
$e_1e_3 = (a+1)e_3$, $e_2e_1 = a e_3$, $e_3e_1= a e_3$ \\

$N^{\g_2^0}_9$ & $e_1e_1 = -e_1+e_3$, 
$e_2e_1 = -e_3$, $e_3e_1= -e_3$ \\

$N^{\g_2^0}_{10}(a)$ & $e_1e_1 = a e_1$, $e_1e_2 = a e_2+e_3$,
$e_1e_3 = (a+1)e_3$, $e_2e_1=ae_2$, $e_2e_2=-e_2+e_3$, \\
& $e_3e_1= ae_3$ \\

$N^{\g_2^0}_{11}$ & $e_1e_1 = -e_1+e_3$, $e_1e_2 = -e_2+e_3$, $e_2e_1=-e_2$, $e_2e_2=-e_2+e_3$, 
$e_3e_1= -e_3$ \\

\hline
\end{longtable}

Remark: $N^{\g_2^0}_8(0)$ is $N^{\g_2^0}_1(0)$; so for the first algebra we
take $a\neq 0$.

\begin{longtable}{|l|l|}
\caption{Novikov algebras with Lie algebra $\g_3$.}\label{tab3_5}
\endfirsthead
\hline
\multicolumn{2}{|l|}{\small\slshape Novikov algebras corresponding 
to $\g_3$.} \\ 
\hline
\endhead
\hline
\endfoot
\endlastfoot

\hline
name & multiplication table \\

\hline

$N^{\g_3}_1(a)$ & $e_1e_1 = e_2$, $e_1e_2 = (a+1)e_3$, $e_2e_1=a e_3$\\ 

$N^{\g_3}_2(a)$ & $e_1e_1 = a e_3$, $e_1e_2 = e_3$, $e_2e_2= e_3$\\ 

$N^{\g_3}_3$ & $e_1e_1 = e_3$, $e_1e_2 = e_1$, $e_2e_1=e_1-e_3$,
$e_2e_2= e_2$, $e_2e_3=e_3$, $e_3e_2=e_3$\\ 

$N^{\g_3}_4$ & $e_1e_2 = e_1$, $e_2e_1=e_1-e_3$,
$e_2e_2= e_2$, $e_2e_3=e_3$, $e_3e_2=e_3$\\ 

$N^{\g_3}_5$ & $e_1e_2 = \tfrac{1}{2}e_3$, $e_2e_1=-\tfrac{1}{2}e_3$\\

\hline
\end{longtable}

\begin{longtable}{|l|l|}
\caption{Novikov algebras with Lie algebra $\g_4$.}\label{tab3_6}
\endfirsthead
\hline
\multicolumn{2}{|l|}{\small\slshape Novikov algebras corresponding 
to $\g_4$.} \\ 
\hline
\endhead
\hline
\endfoot
\endlastfoot

\hline
name & multiplication table \\

\hline

$N^{\g_4}_1(a)$ & $e_1e_1 = a e_1$, $e_1e_2 = ae_2+e_3$, $e_1e_3=e_2+ae_3$,
$e_2e_1=a e_2$, $e_3e_1=a e_3$\\ 

$N^{\g_4}_2$ & $e_1e_1 = e_1+e_3$, $e_1e_2 = e_2+e_3$, $e_1e_3=e_2+e_3$,
$e_2e_1= e_2$, $e_3e_1= e_3$\\ 

\hline
\end{longtable}

Remark: $N^{\g_4}_1(a)$ is isomorphic to $N^{\g_4}_1(b)$, if and only if
$a=b$ or $a=-b$. (In the latter case, $\phi(e_1)=-x_1$, $\phi(e_2)=x_2$,
$\phi(e_3) = -x_3$ defines an isomorphism. Here the $x_i$ are the basis elements
of $N^{\g_4}_1(b)$.)

\begin{longtable}{|l|l|}
\caption{Novikov algebras over $\R$ with Lie algebra $\g_5$.}\label{tab3_7}
\endfirsthead
\hline
\multicolumn{2}{|l|}{\small\slshape Novikov algebras corresponding 
to $\g_5$.} \\ 
\hline
\endhead
\hline
\endfoot
\endlastfoot

\hline
name & multiplication table \\

\hline

$N^{\g_5}_1(a)$ & $e_1e_1 = a e_1$, $e_1e_2 = ae_2+e_3$, $e_1e_3=-e_2+ae_3$,
$e_2e_1=a e_2$, $e_3e_1=a e_3$\\ 

\hline
\end{longtable}

Remarks:
\begin{itemize}
\item Let $e_i$ denote the basis elements of $N^{\g_5}_1(a)$, and let the base field be $\C$.
Then setting $x_1 = ie_1$, $x_2=e_2$, $x_3= ie_3$, we see that the $x_i$ satisfy the multiplication
table of $N^{\g_4}_1(ia)$. Hence, over $\C$ we have that $N^{\g_5}_1(a)$ is isomorphic to 
$N^{\g_4}_1(ia)$. Over $\R$ such an isomorphism does not exist as the 
underlying Lie algebras are not isomorphic.
\item Table \ref{tab3_7} gives a list of all Novikov algebras over $\R$, with Lie 
algebra $\g_5$. Over $\C$ there would be an extra algebra isomorphic to $N_2^{\g_4}$; however
over $\R$ this algebra does not exist.
\item Also here $N^{\g_5}_1(a)$ is isomorphic to $N^{\g_5}_1(b)$ if and
only if $a=b$ or $a=-b$.
\end{itemize}

\section{Novikov algebras over $\C$ of dimension four with a nilpotent Lie algebra}
\label{sec:dim4}

In this section we give the classification of the 4-dimensional Novikov algebras
over $\C$ such that the associated Lie algebra is nilpotent.

First of all there are the Novikov algebras that have an abelian associated Lie
algebra. Again those are commutative associative algebras. They can be classified
using the same procedure as in Section \ref{sec:caa}, using the classification of
nilpotent CAA's up to dimension 4. This can be obtained from \cite{poonen}, or
\cite{gra_ass}. For the nilpotent CAA's of dimensions up to 3 we use the multiplication tables
of Table \ref{tab:nilcaas}. The multiplication tables of the nilpotent CAA's of dimension 4
are taken from \cite{gra_ass}. We get the following list of CAA's of dimension 4.

\newpage

\begin{longtable}{|r|l|}
\caption{CAA's of dimension 4.}\label{tab2}
\endfirsthead
\hline
\multicolumn{2}{|l|}{\small\slshape CAA's of dimension 4.} \\ 
\hline
\endhead
\hline
\endfoot
\endlastfoot

\hline
name & multiplication table \\

\hline

$A_{4,1}$ & \\
$A_{4,2}$ & $e_1^2=e_2$\\
$A_{4,3}$ & $e_1^2=e_3$, $e_2^2=e_3$\\
$A_{4,4}$ & $e_1^2 = e_2$, $e_1e_2=e_3$\\
$A_{4,5}$ & $e_1^2= -e_3$, $e_1e_2=e_4$, $e_2^2=e_3$\\
$A_{4,6}$ & $e_1e_2=e_4$, $e_2^2=e_3$\\
$A_{4,7}$ & $e_1^2=e_4$, $e_2^2=e_4$, $e_3^2=e_4$\\
$A_{4,8}$ & $e_1^2=e_2$, $e_1e_2=e_4$, $e_3^2=e_4$\\
$A_{4,9}$ & $e_1^2=e_2$, $e_1e_2=e_3$, $e_1e_3=e_4$, $e_2^2=e_4$\\

$4\wt{A}_0$ & $e_1^2=e_1$, $e_2^2=e_2$, $e_3^2=e_3$, $e_4^2=e_4$\\

$2\wt{A}_0+\wt{A}_1$ & $e_1^2=e_1$, $e_2^2=e_2$, $e_3^2=e_3$, $e_3e_4=e_4$\\

$2\wt{A}_1$ & $e_1^2=e_1$, $e_1e_2=e_2$, $e_3^2=e_3$, $e_3e_4=e_4$\\

$\wt{A}_0+\wt{A}_{2,1}$ & $e_1^2=e_1$, $e_2^2=e_2$, $e_2e_3=e_3$, $e_2e_4=e_4$\\

$\wt{A}_0+\wt{A}_{2,2}$ & $e_1^2=e_1$, $e_2^2=e_2$, $e_2e_3=e_3$, $e_2e_4=e_4$,
$e_3^2=e_4$\\

$\wt{A}_{3,1}$ & $e_1^2=e_1$, $e_1e_2=e_2$, $e_1e_3=e_3$, $e_1e_4=e_4$\\

$\wt{A}_{3,2}$ & $e_1^2=e_1$, $e_1e_2=e_2$, $e_1e_3=e_3$, $e_1e_4=e_4$, 
$e_2^2=e_3$\\

$\wt{A}_{3,3}$ & $e_1^2=e_1$, $e_1e_2=e_2$, $e_1e_3=e_3$, $e_1e_4=e_4$,
$e_2^2=e_3$, $e_2e_3=e_4$\\

$\wt{A}_{3,4}$ & $e_1^2=e_1$, $e_1e_2=e_2$, $e_1e_3=e_3$, $e_1e_4=e_4$
$e_2^2=e_4$, $e_3^2=e_4$ \\

$3\wt{A}_0+A_1$ & $e_1^2=e_1$, $e_2^2=e_2$, $e_3^2=e_3$ \\
$\wt{A}_0+\wt{A}_1+A_1$ & $e_1^2=e_1$, $e_2^2=e_2$, $e_2e_3=e_3$ \\
$\wt{A}_{2,1}+A_1$ & $e_1^2=e_1$, $e_1e_2=e_2$, $e_1e_3=e_3$ \\
$\wt{A}_{2,2}+A_1$ & $e_1^2=e_1$, $e_1e_2=e_2$, $e_1e_3=e_3$, $e_2^2=e_3$ \\

$2\wt{A}_0+A_{2,1}$ &  $e_1^2=e_1$, $e_2^2=e_2$ \\
$2\wt{A}_0+A_{2,2}$ &  $e_1^2=e_1$, $e_2^2=e_2$, $e_3^2=e_4$ \\
$\wt{A}_1+A_{2,1}$ & $e_1^2=e_1$, $e_1e_2=e_2$\\
$\wt{A}_1+A_{2,2}$ & $e_1^2=e_1$, $e_1e_2=e_2$, $e_3^2=e_4$\\

$\wt{A}_0+A_{3,1}$ & $e_1^2=e_1$ \\
$\wt{A}_0+A_{3,2}$ & $e_1^2=e_1$, $e_2e_3=e_4$ \\
$\wt{A}_0+A_{3,3}$ & $e_1^2=e_1$, $e_2^2=e_3$ \\
$\wt{A}_0+A_{3,4}$ & $e_1^2=e_1$, $e_2^2=e_3$, $e_2e_3=e_4$ \\

\hline

\end{longtable}

There are the following non-abelian nilpotent Lie algebras of dimension 4:

\begin{center}
\begin{tabular}{|l|l|}
\hline 
name & nonzero brackets \\
\hline
$\hh_1$ & $[e_1,e_2]=e_3$\\ 
$\hh_2$ & $[e_1,e_2] = e_3$, $[e_1,e_3]=e_4$ \\
\hline
\end{tabular}
\end{center}

Tabel \ref{tab3} contains the Novikov algebras with associated Lie algebra $\hh_1$.
Table \ref{tab4} has those with Lie algebra $\hh_2$.

\newpage

\begin{longtable}{|l|l|}
\caption{Novikov algebras with Lie algebra $\hh_1$.}\label{tab3}
\endfirsthead
\hline
\multicolumn{2}{|l|}{\small\slshape Novikov algebras corresponding to $\hh_1$.} \\ 
\hline
\endhead
\hline
\endfoot
\endlastfoot

\hline
name & multiplication table \\

\hline

$N^{\hh_1}_1(\alpha)$ & $e_1e_2 = (\alpha+1) e_3, ~ e_2e_1 = \alpha e_3$ \\
$N^{\hh_1}_2(\alpha)$ & $e_1e_2 = (\alpha+1) e_3, ~ e_2e_1 = \alpha e_3, 
e_2e_2=e_4$\\ 
$N^{\hh_1}_3(\alpha)$ &
$e_1e_2 = (\alpha+1) e_3, ~ e_2e_1 = \alpha e_3, e_2e_2=e_1$\\
$N^{\hh_1}_4$ & $e_1e_1 = e_3, ~ e_1e_2 = e_3, e_2e_2=e_4$\\
$N^{\hh_1}_5$ & $e_1e_2=e_3,~ e_2e_2=e_4,~ e_2e_4=e_3,~ e_4e_2=e_3$\\
$N^{\hh_1}_6$ & $e_1e_2=e_3,~ e_2e_4=e_3,~ e_4e_2=e_3$\\
$N^{\hh_1}_7(\alpha)$ & $e_1e_2=e_3,~ e_2e_2=e_1+\alpha e_4,~ e_2e_4=e_3,~ 
e_4e_2=e_3$\\
$N^{\hh_1}_8$ &
$e_1e_1=e_3,~ e_1e_2=e_3,~ e_2e_2= e_4,~ e_2e_4=e_3,~ e_4e_2=e_3$\\
$N^{\hh_1}_9$ & $e_1e_1=e_3,~ e_1e_2=e_3,~ e_2e_4=e_3,~ e_4e_2=e_3$\\
$N^{\hh_1}_{10}(\alpha)$ &
$e_1e_2 = (\alpha+1) e_3, ~ e_2e_1 = \alpha e_3,~ e_4e_4=e_3$\\
$N^{\hh_1}_{11}$ & $e_1e_2 = \tfrac{1}{2}e_3,~ e_2e_1 = -\tfrac{1}{2}e_3,~ 
e_2e_2=e_3,~ e_4e_4=e_3$\\
$N^{\hh_1}_{12}(\alpha)$ &
$e_1e_2 = (\alpha+1) e_3, ~ e_2e_1 = \alpha e_3, e_2e_2=e_1,~
e_4e_4 = e_3$\\
$N^{\hh_1}_{13}$ & $e_1e_2=e_3+e_4,~ e_2e_1=e_4$\\
$N^{\hh_1}_{14}$ & $e_1e_2=e_3+e_4,~ e_2e_1=e_4,~ e_2e_2=e_1$\\
$N^{\hh_1}_{15}(\alpha)$ &
$e_1e_1=e_3,~ e_1e_2=e_3+e_4,~ e_2e_1=e_4,~ e_2e_2=\alpha e_3$\\
$N^{\hh_1}_{16}$ &
$e_1e_1=e_3,~ e_1e_2=e_3+e_4,~ e_2e_1=e_4,~ e_2e_2=e_1,~ e_2e_4=e_3,~
e_4e_2=e_3$\\
$N^{\hh_1}_{17}$ &
$e_1e_1=e_1,~ e_1e_2=e_2+e_3,~ e_1e_3=e_3,~ e_2e_1=e_2,~
e_2e_2=e_3,~ e_3e_1=e_3$\\
$N^{\hh_1}_{18}$ &
$e_1e_1=e_1,~ e_1e_2=e_2+e_3,~ e_1e_3=e_3,~ e_2e_1=e_2,~
e_3e_1=e_3$\\
$N^{\hh_1}_{19}$ & 
$e_1e_1=e_1,~ e_1e_2=e_2+e_3,~ e_1e_3=e_3,~ e_1e_4=e_4,~ e_2e_1=e_2,~
e_3e_1=e_3,~ e_4e_1=e_4$\\
$N^{\hh_1}_{20}$ & $e_1e_1=e_1,~ e_1e_2=e_2+e_3,~ e_1e_3=e_3,~ e_1e_4=e_4,~ 
e_2e_1=e_2,~e_2e_2=e_2,~ e_3e_1=e_3,$ \\
& $e_4e_1=e_4$\\
$N^{\hh_1}_{21}$ &
$e_1e_1=e_1,~ e_1e_2=e_2+e_3,~ e_1e_3=e_3,~ e_1e_4=e_4,~ e_2e_1=e_2,~
e_2e_2=e_4,~ e_3e_1=e_3$,\\ 
& $e_4e_1=e_4$\\
$N^{\hh_1}_{22}$ & 
$e_1e_1=e_1,~ e_1e_2=e_2+e_3,~ e_1e_3=e_3,~ e_1e_4=e_4,~ e_2e_1=e_2,~
e_2e_4=e_3,~ e_3e_1=e_3$,\\ 
& $e_4e_1=e_4,$ $e_4e_2=e_3$\\
$N^{\hh_1}_{23}$ & $e_1e_1=e_1,~ e_1e_2=e_2+e_3,~ e_1e_3=e_3,~ e_1e_4=e_4,~ 
e_2e_1=e_2,~
e_2e_2=e_4,~ e_2e_4=e_3$,\\
& $e_3e_1=e_3,$ $e_4e_1=e_4,~ e_4e_2=e_3$\\
$N^{\hh_1}_{24}$ & $e_1e_1=e_1,~ e_1e_2=e_2+e_3,~ e_1e_3=e_3,~ e_1e_4=e_4,~ 
e_2e_1=e_2,~e_2e_2=e_3,~ e_3e_1=e_3$,\\
& $ e_4e_1=e_4,$ $ e_4e_4=e_3$\\
$N^{\hh_1}_{25}$ &
$e_1e_1=e_1,~ e_1e_2=e_2+e_3,~ e_1e_3=e_3,~ e_1e_4=e_4,~ e_2e_1=e_2,~
e_3e_1=e_3,~ e_4e_1=e_4$,\\
& $ e_4e_4=e_3$\\
$N^{\hh_1}_{26}(\alpha)$ & 
$e_1e_2 = (\alpha+1)e_3,~ e_2e_1 = \alpha e_3, ~ e_2e_2=e_1,~e_4e_4=e_4$\\
$N^{\hh_1}_{27}(\alpha)$ &
$e_1e_2 = (\alpha+1)e_3,~ e_2e_1 = \alpha e_3, ~ e_4e_4=e_4$\\
$N^{\hh_1}_{28}$ &
$e_1e_2 = \tfrac{1}{2}e_3,~ e_2e_1 = -\tfrac{1}{2}e_3,~ e_2e_2=e_3,~
e_4e_4=e_4$\\
$N^{\hh_1}_{29}$ &
$e_1e_1=e_1,~ e_1e_2=e_2+e_3,~ e_1e_3=e_3, ~ e_2e_1=e_2,~
e_2e_2=e_3,~ e_3e_1 = e_3,~ e_4e_4=e_4$\\
$N^{\hh_1}_{30}$ &
$e_1e_1=e_1,~ e_1e_2=e_2+e_3,~ e_1e_3=e_3, ~ e_2e_1=e_2,~
e_3e_1 = e_3,~ e_4e_4=e_4$\\

\hline
\end{longtable}

Among these algebras there are precisely the following isomorphisms:

\begin{itemize}
\item $N_1^{\hh_1}(\alpha) \cong N_1^{\hh_1}(\beta)$ if and only if 
$\alpha = \beta$ or
$\beta = -\alpha-1$. In the latter case, $\phi(e_1) = -y_2$, $\phi(e_2) = 
y_1$, $\phi(e_i)=y_i$, $i=3,4$ defines an isomorphism 
$\phi : N_1^{\hh_1}(\alpha) \to
N_1^{\hh_1}(-\alpha-1)$.

\item $N_{10}^{\hh_1}(\alpha) \cong N_{10}^{\hh_1}(\beta)$ if and only if 
$\alpha = \beta$ or
$\beta = -\alpha-1$. In the latter case, $\phi(e_1) = -y_2$, $\phi(e_2) = 
y_1$, $\phi(e_i)=y_i$, $i=3,4$ defines an isomorphism 
$\phi : N_{10}^{\hh_1}(\alpha)
\to N_{10}^{\hh_1}(-\alpha-1)$. 
\end{itemize}

\begin{longtable}{|l|l|}
\caption{Novikov algebras with Lie algebra $\hh_2$.}\label{tab4}
\endfirsthead
\hline
\multicolumn{2}{|l|}{\small\slshape Novikov algebras corresponding to $\hh_2$.} \\ 
\hline
\endhead
\hline
\endfoot
\endlastfoot

\hline
name & multiplication table \\

\hline

$N_1^{\hh_2}$ & $e_1e_2=e_3,~e_1e_3=e_4$\\
$N_2^{\hh_2}$ & $e_1e_1=e_2,~e_1e_2=e_3,~e_1e_3=e_4$\\
$N_3^{\hh_2}$ & $e_1e_2=e_3+e_4,~e_1e_3=e_4,~e_2e_1=e_4$\\
$N_4^{\hh_2}$ & $e_1e_1=e_2,~e_1e_2=e_3+e_4,~e_1e_3=e_4,~e_2e_1=e_4$\\
$N_5^{\hh_2}$ & $e_1e_3=\tfrac{1}{2}e_4,~e_2e_1=-e_3,~e_3e_1=-\tfrac{1}{2}e_4$\\
$N_6^{\hh_2}$ & $e_1e_1=e_4,~e_1e_3=\tfrac{1}{2}e_4,~e_2e_1=-e_3,~
e_3e_1=-\tfrac{1}{2}e_4$\\
$N_7^{\hh_2}$ & $e_1e_1=e_2,~e_1e_3=\tfrac{1}{2}e_4,~e_2e_1=-e_3,~
e_3e_1=-\tfrac{1}{2}e_4$\\
$N_8^{\hh_2}(\alpha)$ & $e_1e_1=(2\alpha^2+\alpha)e_2,~ e_1e_2=(2\alpha +1)e_3,~ 
e_1e_3=(\alpha+1) e_4,~ e_2e_1 = 2\alpha e_3,~e_2e_2=e_4$,\\
& $e_3e_1= \alpha e_4$\\ 
$N_9^{\hh_2}$ & $e_1e_1=e_3,~ e_1e_2=e_3,~ e_1e_3=e_4,~ e_2e_2=e_4$\\
$N_{11}^{\hh_2}$ & $e_1e_1=e_3,~e_1e_3=\tfrac{1}{2}e_4,~e_2e_1=-e_3,~e_2e_2=e_4,~
e_3e_1=-\tfrac{1}{2} e_4$\\
$N_{12}^{\hh_2}$ & $e_1e_1=e_4,~ e_1e_2=e_3,~e_1e_3=e_4,~e_2e_2=2 e_3,~ 
e_2e_3=e_4,~ e_3e_2=e_4$\\
$N_{13}^{\hh_2}$ & $e_1e_2=e_3,~e_1e_3=e_4,~e_2e_2=2 e_3,~ e_2e_3=e_4,~
e_3e_2=e_4$\\
$N_{14}^{\hh_2}(\alpha)$ & $e_1e_1=\alpha e_4,~ e_1e_2=e_3,~e_1e_3=e_4,~
e_2e_2=2 e_3+e_4,~ e_2e_3=e_4,~e_3e_2=e_4$\\ 
$N_{15}^{\hh_2}$ & $e_1e_1=e_1,~ e_1e_2=e_2+e_3,~e_1e_3=e_3+e_4,~e_1e_4=e_4,~
e_2e_1=e_2,~ e_3e_1=e_3$,\\
& $e_4e_1=e_4$\\
$N_{16}^{\hh_2}$ & $e_1e_1=e_1,~ e_1e_2=e_2+e_3,~e_1e_3=e_3+e_4,~e_1e_4=e_4,~
e_2e_1=e_2,~ e_2e_2=e_4$,\\
& $e_3e_1=e_3,~e_4e_1=e_4$\\
$N_{17}^{\hh_2}$ & $e_1e_1=e_1,~ e_1e_2=e_2+e_3,~e_1e_3=e_3+e_4,~e_1e_4=e_4,~
e_2e_1=e_2,~ e_2e_2=2e_3+\alpha e_4$,\\
& $e_2e_3=e_4,~e_3e_1=e_3,~e_3e_2=e_4,~
e_4e_1=e_4$ 
\\
\hline
\end{longtable}

\end{document}